\newtheorem{theorem}{Theorem}[section]
\newtheorem{lemma}[theorem]{Lemma}
\newtheorem{proposition}[theorem]{Proposition}
\theoremstyle{definition}
\newtheorem{definition}[theorem]{Definition}
\newtheorem{remark}[theorem]{Remark}
\newtheorem{question}[theorem]{Question}
\newcommand{\ZZ}{{\mathbb Z}}
\newcommand{\A}{{\mathcal A}}
\newcommand{\Aut}{\mathsf{Aut}}
\newcommand{\GL}{\mathsf{GL}}
\renewcommand{\vec}{\mathbf}
\newcommand{\Mod}{\mbox{Mod}}
\newcommand{\Div}{\mbox{Div}}
\newcommand{\Aff}{\mbox{Aff}}
\newcommand{\M}{{\mathcal M}}
\begin{document}

\title{The conjugacy problem in automaton groups \\ is not solvable}

\author{Zoran {\v{S}}uni\'c}
\address{Dept. of Mathematics, Texas A\&M Univ. MS-3368, College Station, TX 77843-3368, USA}

\author{Enric Ventura}
\address{Dept. Mat. Apl. III, Universitat Polit\`ecnica de Catalunya, Manresa, Barcelona, Catalunya}

\begin{abstract}
(Free-abelian)-by-free, self-similar groups generated by finite self-similar sets of tree automorphisms and having
unsolvable conjugacy problem are constructed. Along the way, finitely generated, orbit undecidable, free subgroups of $\GL_d(\ZZ)$, for $d \geqslant 6$, and $\Aut(F_d)$, for $d \geqslant 5$, are constructed as well. 
\end{abstract}

\keywords{automaton groups; (free abelian)-by-free groups; conjugacy problem; orbit decidability}

\def\subjclassname{\textup{2010} Mathematics Subject Classification}
\subjclass[2010]{20E8; 20F10}

\maketitle


\section{Introduction}

The goal of this paper is to prove the following result. 

\begin{theorem}\label{t:main}
There exist automaton groups with unsolvable conjugacy problem. 
\end{theorem}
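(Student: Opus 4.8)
The plan is to combine the orbit-decidability criterion for the conjugacy problem in group extensions with an explicit self-similar realization of (free abelian)-by-free groups on a rooted tree. Recall the criterion of Bogopolski--Martino--Ventura, specialized to free quotients: for a short exact sequence $1 \to \ZZ^d \to G \to F_m \to 1$, the group $G$ has solvable conjugacy problem if and only if the image $H \leqslant \GL_d(\ZZ)$ of $F_m$ is \emph{orbit decidable}, i.e.\ there is an algorithm that, given $u, v \in \ZZ^d$, decides whether $v \in H \cdot u$. Hence it suffices to exhibit a finitely generated \emph{free} subgroup $H \leqslant \GL_d(\ZZ)$ that is orbit undecidable: the semidirect product $G = \ZZ^d \rtimes H$ (with $H$ acting by its natural matrix action) is then a (free abelian)-by-free group with unsolvable conjugacy problem.

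Next, I claim that every such $G = \ZZ^d \rtimes H$, with $H \leqslant \GL_d(\ZZ)$ finitely generated, is an automaton group. Fix a prime $p$ and identify the boundary of the rooted $p^d$-ary tree $T$ with $\ZZ_p^{\,d}$, the level-$n$ vertices being the cosets of $p^n \ZZ_p^{\,d}$; translations by elements of $\ZZ^d$ and the linear maps in $\GL_d(\ZZ) \leqslant \GL_d(\ZZ_p)$ all preserve this cylinder structure, so $\ZZ^d \rtimes \GL_d(\ZZ) \leqslant \Aff(\ZZ_p^{\,d})$ acts faithfully on $T$ by tree automorphisms. A computation of sections shows that the translation by each standard basis vector and each linear map $x \mapsto Mx$ with $M \in \GL_d(\ZZ)$ is finite-state, every section having the form $x \mapsto Mx + c$ with $c$ ranging over a bounded --- hence finite --- subset of $\ZZ^d$. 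Consequently, starting from the generators $e_1, \dots, e_d$ of $\ZZ^d$ together with finitely many matrix generators of $H$, the set of all iterated sections is finite and self-similar, so $G$ is generated by a finite self-similar set of tree automorphisms; this already reproves that $\ZZ^d$ and integral affine groups are automaton groups.

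The technical heart is therefore to construct a finitely generated free orbit-undecidable subgroup of $\GL_d(\ZZ)$ with $d = 6$. Take a finitely presented group $P$ with unsolvable word problem and its Mikhailova subgroup $M_P \leqslant F_2 \times F_2$, which is finitely generated, say by $g_1, \dots, g_m$, and has unsolvable membership problem. Embed each factor $F_2$ into $\GL_2(\ZZ)$ as a free subgroup whose nontrivial elements are all hyperbolic (a Schottky subgroup, e.g.\ generated by large powers of two hyperbolic matrices with disjoint pairs of fixed points on the boundary); a hyperbolic matrix in $\GL_2(\ZZ)$ has irrational eigenvalues, so it fixes no nonzero integer vector, and hence \emph{every} nonzero $v \in \ZZ^2$ has trivial stabilizer in the chosen subgroup. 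The resulting faithful action $M_P \hookrightarrow \GL_2(\ZZ) \times \GL_2(\ZZ) \leqslant \GL_4(\ZZ)$ then satisfies, for $(g,h) \in F_2 \times F_2$ and $u_0 = (v_1, v_2)$ with $v_1, v_2$ nonzero: $(g,h) \in M_P$ if and only if $(g,h) \cdot u_0 \in M_P \cdot u_0$. So membership in $M_P$ reduces to the orbit problem for $M_P$ acting on $\ZZ^4$, which is therefore undecidable --- but $M_P$ is not free. To remedy this, set $d = 6$ and define $\rho \colon F_m \to \GL_6(\ZZ) = \GL_4(\ZZ) \times \GL_2(\ZZ)$ by sending the $i$-th free generator to $g_i \oplus C_i$, where $C_1, \dots, C_m$ generate a free subgroup of rank $m$ in $\GL_2(\ZZ)$. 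Projection onto the last two coordinates shows $\rho$ is injective, so $H := \rho(F_m)$ is free of rank $m$; projection onto the first four coordinates carries $H$ onto $M_P$; and since $\rho(F_m)$ fixes the last two coordinates of every vector $(w,0)$, the orbit problem for $H$ restricts on such vectors to the undecidable orbit problem for $M_P$. Thus $H \leqslant \GL_6(\ZZ)$ is a finitely generated free orbit-undecidable subgroup. A parallel construction inside $\Aut(F_d)$, using its larger supply of free subgroups and the conjugacy-orbit form of the criterion, gives orbit-undecidable free subgroups of $\Aut(F_d)$ for $d \geqslant 5$, hence also free-by-free groups with unsolvable conjugacy problem.

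Assembling the pieces, for $d = 6$ the group $G = \ZZ^6 \rtimes H$ is a (free abelian)-by-free automaton group whose conjugacy problem is unsolvable, proving Theorem~\ref{t:main}. I expect the self-similar realization of the second step to be essentially routine once the $p$-adic model is in place; the real obstacle is the third step --- pushing the Mikhailova/orbit-undecidability phenomenon into a genuinely \emph{free} matrix group of small dimension while still retaining a vector with trivial stabilizer --- which the ``free-ifying'' trick with the auxiliary $\GL_2(\ZZ)$ block is designed to overcome.
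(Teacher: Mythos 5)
Your proposal is correct and follows essentially the same route as the paper: the Bogopolski--Martino--Ventura orbit-(un)decidability criterion reduces the problem to finding a finitely generated orbit-undecidable subgroup of $\GL_d(\ZZ)$, the affine action on $\ZZ_p^d$ viewed as the boundary of the $p^d$-ary tree (the Brunner--Sidki technique) realizes $\ZZ^d\rtimes\Gamma$ as an automaton group, and your ``free-ifying'' block-diagonal trick $\GL_4(\ZZ)\times\GL_2(\ZZ)\leqslant\GL_6(\ZZ)$ is exactly the paper's Proposition~\ref{p:general-free} applied to $\ZZ^6=\ZZ^4\oplus\ZZ^2$. The only cosmetic difference is that you re-derive the orbit-undecidable Mikhailova subgroup of $\GL_4(\ZZ)$ (via a purely hyperbolic Schottky embedding of $F_2$), which the paper simply cites from~\cite[Proposition~7.5]{bogopolski-m-v:cp}, and you aim directly for the free version even though the main theorem alone would already follow from any finitely generated orbit-undecidable subgroup.
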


The question on solvability of the conjugacy problem was raised in 2000 for the class of self-similar groups
generated by finite self-similar sets (automaton groups) by Grigorchuk, Nekrashevych and Sushchanski{\u\i}~\cite{grigorchuk-n-s:automata}. Note that the word problem is solvable for all groups in this class by a rather straightforward algorithm running in exponential time. Moreover, for an important subclass consisting of finitely generated, contracting groups the word problem is solvable in polynomial time. Given that our examples contain free nonabelian subgroups and that contracting groups, as well as the groups $\mathsf{Pol}(n)$, $n \geq0$, do not contain such subgroups (see~\cite{nekrashevych:free-subgroups} and~\cite{sidki:pol}), the following question remains open. 

\begin{question}\label{q:open}
Is the conjugacy problem solvable in

(i) all finitely generated, contracting, self-similar groups?

(ii) the class of automaton groups in $\mathsf{Pol}(n)$, for $n \geqslant 0$?
\end{question}

There are many positive results on the solvability of the conjugacy problem in
automaton groups close to the first Grigorchuk group~\cite{grigorchuk:burnside} and the Gupta-Sidki examples~\cite{gupta-s:burnside}. The conjugacy problem was solved for the first Grigorchuk group independently by Leonov~\cite{leonov:conjugacy} and Rozhkov~\cite{rozhkov:conjugacy}, and for the Gupta-Sidki examples by Wilson and Zaleskii~\cite{wilson-z:conjugacy}. Grigorchuk and Wilson~\cite{grigorchuk-w:conjugacy} showed that the problem is solvable in all subgroups of finite index in the first Grigorchuk group. In fact, the results  in~\cite{leonov:conjugacy,wilson-z:conjugacy,grigorchuk-w:conjugacy} apply to certain classes of groups that include the well known examples we explicitly mentioned. In a recent work Bondarenko, Bondarenko, Sidki and Zapata~\cite{bondarenko-b-s-z:conjugacy} showed that the conjugacy problem is solvable in $\mathsf{Pol}(0)$. Lysenok, Myasnikov, and Ushakov provided the first, and so far the only, significant result on the complexity of the conjugacy problem in automaton groups by providing a polynomial time solution for the first Grigorchuk group~\cite{lysenok-m-u:grigorchuk-cp}.

The strategy for our proof of Theorem~\ref{t:main} is as follows. First, we observe the following consequence of a result by Bogopolski, Martino and Ventura~\cite{bogopolski-m-v:cp}. 

\begin{proposition}\label{gamma}
Let $H$ be a finitely generated group, and $\Gamma$ a finitely generated subgroup of $\Aut (H)$. If $\Gamma \leqslant
\Aut (H)$ is orbit undecidable then $H\rtimes \Gamma$ has unsolvable conjugacy problem.
\end{proposition}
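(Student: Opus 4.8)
The plan is to reduce the conjugacy problem in $G = H \rtimes \Gamma$ to the orbit decidability of $\Gamma$ in $\Aut(H)$, using the structure theory developed by Bogopolski, Martino and Ventura for conjugacy in extensions. First I would recall the general principle from \cite{bogopolski-m-v:cp}: for an extension $1 \to H \to G \to Q \to 1$ with suitable algorithmic hypotheses on $H$ and $Q$, the conjugacy problem in $G$ is decidable if and only if a certain ``twisted conjugacy'' / orbit condition holds. In our situation $Q = \Gamma$ is free (or at least nicely behaved: finitely generated, with solvable conjugacy problem and the relevant twisted-conjugacy-type problems decidable), $H$ is finitely generated abelian (in the application to Theorem \ref{t:main}), so $H$ has solvable conjugacy problem and solvable twisted conjugacy problem with respect to any finite set of automorphisms, and the action of $\Gamma$ on $H$ is given explicitly. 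The point is that for the split extension $H \rtimes \Gamma$ all the side conditions in the Bogopolski--Martino--Ventura criterion are automatically satisfied, so that the conjugacy problem in $H \rtimes \Gamma$ becomes \emph{equivalent} to a single remaining problem, namely decidability of membership in $\Gamma$-orbits inside $\Aut(H)$ — this is exactly orbit decidability of $\Gamma \leqslant \Aut(H)$.

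Concretely, I would argue as follows. Two elements $(h_1, \gamma_1)$ and $(h_2, \gamma_2)$ of $H \rtimes \Gamma$ are conjugate in $G$ only if $\gamma_1$ and $\gamma_2$ are conjugate in $\Gamma$; since $\Gamma$ is free this is decidable, and we may reduce to testing conjugacy of pairs with a common $\Gamma$-component or, after conjugating, to a normal form in which the task splits according to the conjugacy classes in $\Gamma$. For a fixed conjugacy class represented by $\gamma \in \Gamma$, conjugating $(h_1,\gamma)$ to $(h_2,\gamma)$ by an element $(k,\delta)$ unwinds into: $\delta$ centralizes $\gamma$ up to inner adjustments (the centralizer of $\gamma$ in a free group is cyclic and computable), and then a condition on $h_1, h_2, k$ that says $h_2$ lies in the orbit of $h_1$ under the group generated by $\gamma$ acting on $H$ together with a twisted-conjugacy correction. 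Because $H$ is abelian and finitely generated, each such sub-question (twisted conjugacy in $H$ with respect to explicit automorphisms, membership in a finitely generated subgroup of $H$, etc.) is linear algebra over $\ZZ$ and hence decidable. The only ingredient that is \emph{not} automatically available is deciding whether the automorphism of $H$ induced by conjugation lands in the $\Gamma$-orbit of the identity-correction — and that is precisely where orbit decidability of $\Gamma$ in $\Aut(H)$ is invoked. Hence if $\Gamma$ were orbit decidable, $G$ would have solvable conjugacy problem; contrapositively, orbit undecidability of $\Gamma$ forces $G$ to have unsolvable conjugacy problem.

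The main obstacle, and the step I would spend the most care on, is making the bookkeeping of the Bogopolski--Martino--Ventura machinery precise enough to see that orbit decidability is not merely \emph{sufficient} but genuinely \emph{necessary} for conjugacy decidability here — i.e. that one can \emph{encode} an arbitrary orbit query as a conjugacy query in $H \rtimes \Gamma$. This requires choosing the right pair of elements: given $\phi \in \Aut(H)$ (presented, say, as a matrix when $H$ is free abelian) and a generator $\gamma$ of $\Gamma$, one sets up elements whose conjugacy in $G$ holds if and only if $\phi$ lies in the $\Gamma$-orbit of a designated automorphism, so that a conjugacy algorithm for $G$ would solve the orbit problem for $\Gamma$. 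Verifying this equivalence — in particular checking that the free-group side conditions (computing centralizers of $\gamma$, solving the conjugacy problem in $\Gamma$) and the abelian side conditions (twisted conjugacy in $H$) contribute nothing undecidable — is routine given \cite{bogopolski-m-v:cp}, but it is the heart of the matter and should be stated carefully. I would then simply cite the relevant theorem of \cite{bogopolski-m-v:cp} for the hard direction and fill in the (easy) reductions on the $\Gamma$ and $H$ sides.
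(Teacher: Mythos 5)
Your plan runs through the wrong implication of the Bogopolski--Martino--Ventura theorem. In your second paragraph you verify the side conditions (conjugacy and centralizers in a free $\Gamma$, twisted conjugacy in an abelian $H$) and conclude: ``if $\Gamma$ were orbit decidable, $G$ would have solvable conjugacy problem; contrapositively, orbit undecidability of $\Gamma$ forces $G$ to have unsolvable conjugacy problem.'' That is not the contrapositive: the contrapositive of (orbit decidable $\Rightarrow$ CP solvable) is (CP unsolvable $\Rightarrow$ orbit undecidable), which is useless here. What the proposition needs is (CP solvable $\Rightarrow$ orbit decidable), i.e. the implication (a) $\Rightarrow$ (c) of Theorem~\ref{bmv}, and this is the \emph{easy} direction, the one the paper actually uses: it holds by definition, for arbitrary finitely generated $H$ and $\Gamma \leqslant \Aut(H)$, with none of the hypotheses (i)--(iii). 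Indeed, for $u,v \in H \leqslant G = H \rtimes \Gamma$, conjugating $u$ by an element $g$ of $G$ acts on $H$ as the composition of an element of $\Gamma$ with an inner automorphism of $H$, so $u^g=v$ for some $g \in G$ if and only if some $\gamma \in \Gamma$ sends $u$ to an $H$-conjugate of $v$. Hence a conjugacy algorithm for $G$, applied to pairs $u,v \in H$, solves the orbit problem for $\Gamma$, and orbit undecidability of $\Gamma$ immediately kills the conjugacy problem in $G$. No twisted conjugacy in $H$, no centralizer computations in $\Gamma$, and no assumption that $H$ is abelian or $\Gamma$ free are needed --- which matters, because the proposition is stated for arbitrary finitely generated $H$ and $\Gamma$, while your verification of the hard-direction hypotheses only makes sense in the special case you have in mind.

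Your last paragraph does recognize that necessity of orbit decidability is the real point, but then misdescribes what must be encoded: the orbit problem takes as input two elements $u,v \in H$, not an automorphism $\phi$, and the ``encoding'' is simply to feed the same pair $u,v$, viewed inside $G$, to the conjugacy algorithm --- there is no designated automorphism to hit, no choice of clever elements, and no BMV machinery to invoke. Deferring this step to ``the relevant theorem of [BMV] for the hard direction'' keeps the argument aimed at (c) $\Rightarrow$ (a), which is both unnecessary for this proposition and, by itself, insufficient to yield it.
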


Since, for $d\geqslant 4$, examples of finitely generated orbit undecidable subgroups $\Gamma$ in $GL_d(\mathbb{Z})$ are provided in~\cite{bogopolski-m-v:cp}, we obtain the existence of groups of the form $\mathbb{Z}^d \rtimes \Gamma$ with unsolvable conjugacy problem. Finally, using techniques of Brunner and Sidki~\cite{brunner-s:glnz}, we prove the following result, which implies Theorem~\ref{t:main}. 

\begin{theorem}\label{cor}
Let $\Gamma$ be an arbitrary finitely generated subgroup of $GL_d(\mathbb{Z})$. Then, $\mathbb{Z}^d \rtimes \Gamma$ is an automaton group.
\end{theorem}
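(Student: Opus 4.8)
The plan is to realize the affine group $\Aff_d(\mathbb{Z}) = \mathbb{Z}^d \rtimes \GL_d(\mathbb{Z})$, and with it every group of the form $\mathbb{Z}^d \rtimes \Gamma$, as a group of finite-state automorphisms of the $2^d$-regular rooted tree, following Brunner and Sidki~\cite{brunner-s:glnz}. Identify the vertices at level $n$ of this tree $T$ with $(\mathbb{Z}/2^n\mathbb{Z})^d$, so that the boundary $\partial T$ is the group $\mathbb{Z}_2^d$ of $d$-tuples of $2$-adic integers, and record a point $x \in \mathbb{Z}_2^d$ digit by digit as $x = \sum_{k \geq 0} x_k 2^k$ with $x_k \in \{0,1\}^d$. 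An element $(c,A) \in \mathbb{Z}^d \rtimes \GL_d(\mathbb{Z})$ acts on $\mathbb{Z}_2^d$ by $z \mapsto Az+c$. Since $\det A = \pm 1$ is a unit in $\mathbb{Z}_2$, each such map is a homeomorphism of $\mathbb{Z}_2^d$ respecting the partition into cosets of $2^n\mathbb{Z}_2^d$ for every $n$; hence it is a tree automorphism of $T$. The action is faithful (if $z \mapsto Az+c$ is the identity on $\mathbb{Z}_2^d$, then $c=0$ by taking $z=0$, and then $A=I$), so $\mathbb{Z}^d \rtimes \Gamma$ embeds in $\Aut(T)$ for every $\Gamma \leqslant \GL_d(\mathbb{Z})$.

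The key computation is the description of sections. Writing a point of $\mathbb{Z}_2^d$ as $y_0 + 2x'$ with $y_0 \in \{0,1\}^d$ its first digit and $x' \in \mathbb{Z}_2^d$, one gets
\[
A(y_0 + 2x') + c \;=\; \big((Ay_0 + c)\bmod 2\big) \;+\; 2\big(Ax' + v\big),
\]
where $(Ay_0+c)\bmod 2 \in \{0,1\}^d$ is the coordinatewise reduction and $v = v(A,c,y_0) := \tfrac12\big(Ay_0 + c - ((Ay_0+c)\bmod 2)\big) \in \mathbb{Z}^d$. Thus the automorphism $(c,A)$ sends the first-level vertex $y_0$ to the first-level vertex $(Ay_0+c)\bmod 2$, and its section there is the automorphism $(v,A)$. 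In particular the matrix part $A$ is never altered by taking sections; only the translation part changes, via $c \mapsto v$. Since $\|v\|_\infty \leqslant \tfrac12(N_A + \|c\|_\infty)$ with $N_A := \max_{y_0 \in \{0,1\}^d}\|Ay_0\|_\infty$ depending only on $A$, the ball $\{c \in \mathbb{Z}^d : \|c\|_\infty \leqslant N_A\}$ is forward invariant under $c \mapsto v(A,c,\cdot)$ and contains $0$. Hence the set $C_A$ of translation vectors reachable from $0$ is finite, $(0,A)$ is a finite-state tree automorphism, and its complete set of sections is $\{(c,A) : c \in C_A\}$.

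Now let $\Gamma = \langle A_1,\dots,A_m\rangle \leqslant \GL_d(\mathbb{Z})$ (we may take $\{A_1,\dots,A_m\}$ symmetric), let $e_1,\dots,e_d$ denote the standard generators of $\mathbb{Z}^d$ viewed inside $\mathbb{Z}^d \rtimes \Gamma$, and set
\[
S \;=\; \{1\}\ \cup\ \{e_1^{\pm1},\dots,e_d^{\pm1}\}\ \cup\ \bigcup_{i=1}^m \{(c,A_i) : c \in C_{A_i}\}.
\]
This is a finite set. It generates $\mathbb{Z}^d \rtimes \Gamma$, since it contains $e_1,\dots,e_d$ (generating the normal $\mathbb{Z}^d$) and contains $(0,A_i)=A_i$ for each $i$ because $0 \in C_{A_i}$. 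And $S$ is self-similar: the section at any first-level vertex of any $(c,A_i)$ again lies in $S$ by the forward invariance of $C_{A_i}$, and the sections of $e_j^{\pm1}$ (the computation above with $A=I$, i.e.\ an adding/subtracting machine in coordinate $j$) lie in $\{1,e_j^{\pm1}\}$; closing $S$ under inversion, if one's definition requires it, changes nothing. Therefore $\mathbb{Z}^d \rtimes \Gamma$ is generated by a finite self-similar set of tree automorphisms, i.e.\ it is an automaton group, which is Theorem~\ref{cor}; combined with Proposition~\ref{gamma} and the orbit undecidable subgroups of $\GL_d(\mathbb{Z})$ from~\cite{bogopolski-m-v:cp}, this yields Theorem~\ref{t:main}.

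I expect the only genuine difficulty to be the finite-state claim: the fact that the carries produced when pushing an integer-matrix linear map down the $2$-adic tree stay bounded, which is exactly what forces each $C_{A_i}$ to be finite. Everything else — that the affine maps act by tree automorphisms (here, and only here, $\det A = \pm 1$ is used, to get automorphisms of each finite level rather than mere endomorphisms), that the action is faithful, and that $S$ is finite, self-similar, and generates $\mathbb{Z}^d \rtimes \Gamma$ — is then routine bookkeeping.
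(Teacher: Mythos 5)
Your proposal is correct and is essentially the paper's own argument (the Brunner--Sidki affine action on $\mathbb{Z}_n^d$, specialized to $n=2$): the key step --- that the section of $z\mapsto Az+c$ at a first-level vertex is again affine with the same matrix and a ``carry'' vector confined to a fixed finite box --- is exactly the paper's decomposition formula and automaton $\A_{M,n}$, the only cosmetic difference being that you adjoin the adding machines $e_j^{\pm1}$ to the generating set directly, while the paper recovers the translations as products $m_{\vec 0}(m_{-\vec e_j})^{-1}$ of states. One tiny slip: for negative coordinates the bound should read $\|v\|_\infty\leqslant\tfrac12(N_A+\|c\|_\infty+1)$ rather than $\tfrac12(N_A+\|c\|_\infty)$, but since $v$ is an integer vector this still gives forward invariance of the ball of radius $N_A$, so the finiteness of $C_A$ and the rest of the argument are unaffected.
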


The examples of finitely generated orbit undecidable subgroups of $\GL_d(\ZZ)$, for $d\geqslant 4$ given in~\cite{bogopolski-m-v:cp} are based on Mikhailova's construction and are not finitely presented. By modifying the construction in~\cite{bogopolski-m-v:cp}, at the cost of increasing the dimension by 2, we determine finitely generated, orbit undecidable, free subgroups of $GL_d(\mathbb{Z})$, for $d\geqslant 6$. Note that, by~\cite[Proposition 6.9.]{bogopolski-m-v:cp} and the Tits Alternative~\cite{tits:alternative}, every orbit undecidable subgroup $\Gamma$ of $\GL_d(\ZZ)$ contains free nonabelian subgroups. By using the same technique (see Proposition~\ref{p:general-free}) we also construct finitely generated, orbit undecidable, free subgroups of $\Aut(F_d)$, for $d \geq 5$, answering Question~6 raised in~\cite{bogopolski-m-v:cp}.

\begin{proposition}\label{p:free-orbit-undecidable}
(a) For $d\geqslant 6$, the group $\GL_d(\ZZ)$ contains finitely generated, orbit undecidable, free subgroups. 

(b) For $d\geqslant 5$, the group $\Aut(F_d)$ contains finitely generated, orbit undecidable, free subgroups. 
\end{proposition}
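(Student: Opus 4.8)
The plan is to reuse the criterion from~\cite{bogopolski-m-v:cp} characterizing orbit undecidability via an embedded "encoding" of a group with unsolvable membership (or some undecidable problem) into the action. The key observation driving the dimension counts is that one can build a single finitely generated \emph{free} group $F$ of rank $2$ that maps to $\GL_2(\ZZ)$ (resp.\ $\Aut(F_2)$) in two independent ways, and combine two such copies by a block-diagonal embedding. Concretely, I would first fix two matrices $A,B \in \GL_2(\ZZ)$ generating a free group of rank $2$ (e.g.\ $\begin{pmatrix}1&2\\0&1\end{pmatrix}$ and $\begin{pmatrix}1&0\\2&1\end{pmatrix}$, the classical Sanov pair). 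Then inside $\GL_6(\ZZ)$ I would place a free subgroup $\Gamma = \langle g_1, g_2, \dots \rangle$ acting block-diagonally on $\ZZ^2 \oplus \ZZ^2 \oplus \ZZ^2$, where the three blocks carry three coordinated free actions whose interplay encodes an instance of an undecidable problem (membership in a fixed finitely generated subgroup of $F_2 \times F_2$, à la Mikhailova, but now with the ambient group itself being free — this is exactly where the Mikhailova subgroup is replaced by a \emph{free} subgroup, at the cost of the two extra dimensions).

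The central step, isolated as Proposition~\ref{p:general-free} in the paper, is a general transfer lemma: given a group $G$ acting faithfully on an abelian group $V$ (here $\ZZ^d$) or on a free group $F_d$, and given a finitely generated free subgroup $\Phi \leqslant G \times G$ (or an appropriate fiber product) realizing the orbit-undecidability obstruction, one obtains a finitely generated free orbit-undecidable subgroup of $\Aut(V)$ (resp.\ of $\Aut(F_d)$). I would carry this out by: (i) recalling the reduction of orbit undecidability to the undecidability of "is $v$ in the $\Gamma$-orbit of $w$" for suitable $v,w$; (ii) choosing $\Gamma$ to be generated by lifts of the generators of a finite index / finitely generated free subgroup of $F_2 \times F_2$ whose membership problem is undecidable (Mikhailova), realized via the two Sanov-type embeddings $F_2 \hookrightarrow \GL_2(\ZZ)$ acting on the first two blocks, with the third block used as a "register" coordinating the two; (iii) verifying that the resulting subgroup of $\GL_6(\ZZ)$ is free by a ping-pong argument on $\ZZ^6$ (or on a suitable product of projective spaces), which follows since free-ness of a group of matrices can be checked on any faithful block, and our blocks are built from honestly free matrix pairs. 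For part~(b), the $\Aut(F_d)$ statement with $d \geqslant 5$, I would repeat the scheme but exploit that $F_2 \times F_2$ already embeds into $\Aut(F_d)$ for smaller $d$ (one saves a dimension because inner automorphisms and the richer structure of $\Aut(F_d)$ allow a more economical coordination of the two free actions); the orbit-undecidability obstruction transfers verbatim since it only uses the two projections and the undecidable fiber product, answering Question~6 of~\cite{bogopolski-m-v:cp}.

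The main obstacle I expect is step~(iii): arranging the block-diagonal action so that the resulting subgroup is simultaneously (a) \emph{free} — not merely virtually free or containing a free subgroup — and (b) genuinely orbit undecidable, i.e.\ the coordination block must actually transmit the undecidability of the $F_2 \times F_2$ membership problem into a statement about a single $\Gamma$-orbit in $\ZZ^d$, rather than leaking extra relations that would either kill freeness or make the orbit problem decidable. Concretely, one must check that the stabilizer of the relevant vector $w \in \ZZ^6$ under $\Gamma$ is exactly the image of the Mikhailova subgroup, so that "$v \in \Gamma \cdot w$" translates faithfully to a membership question in $F_2 \times F_2$; this amounts to a careful linear-algebra computation ensuring no accidental coincidences among orbit points, and it is precisely the place where the extra two dimensions are spent. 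Once this is in place, freeness is a routine ping-pong verification and orbit undecidability is immediate from Mikhailova's theorem, so I would devote the bulk of the write-up to the stabilizer computation and relegate the rest to standard arguments.
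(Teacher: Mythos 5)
Your overall architecture --- spend the two extra dimensions on a ``register'' block so that the generated group becomes free, while the undecidability is carried by the Mikhailova-based block --- is indeed the idea of the paper, but both of your key verifications go astray. For freeness, there is no ping-pong on $\ZZ^6$ to be done, and ``freeness can be checked on any faithful block'' is backwards: the first two blocks carry the image of a Mikhailova fiber product, which is never free (it contains $\ZZ^2$, e.g.\ $\langle (r,1),(1,r)\rangle$ for a relator $r$), and you do not know in advance that the projection to the register block is faithful. The correct mechanism is a retraction argument: pair the $i$-th generator of the (non-free) orbit-undecidable group $\Gamma\leqslant\GL_{d-2}(\ZZ)$ of~\cite{bogopolski-m-v:cp} with the $i$-th element of a basis of a rank-$m$ free subgroup of $\GL_2(\ZZ)$ acting on the extra $\ZZ^2$; the resulting $m$-generated group $\Gamma'$ maps onto $F_m$ sending generators to a basis, and such a surjection splits, so $\Gamma'$ is free of rank $m$. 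For orbit undecidability your plan rests on a false premise and a deferred computation: there is no ``finitely generated free subgroup of $F_2\times F_2$ with undecidable membership'' in your sketch (Mikhailova subgroups are not free, and finite-index subgroups have decidable membership), and the ``stabilizer of $w\in\ZZ^6$'' analysis you postpone is exactly the content you would have to supply. The paper needs none of it: it uses the BMV orbit-undecidable subgroup as a black box and only observes that, for any word $w$, the extended automorphism $w(s_1,\dots,s_m)$ agrees with $w(g_1,\dots,g_m)$ on the invariant subgroup $H=\ZZ^{d-2}$; hence restricting orbit-problem instances $u,v$ to $H$ reduces the (undecidable) orbit problem for $\Gamma$ to the orbit problem for the free group $\Gamma'$, with no re-derivation of the encoding.

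For part (b) your explanation of the bound $d\geqslant 5$ (inner automorphisms, ``more economical coordination'') is not the reason and proves nothing; the actual source of the count is that \cite{bogopolski-m-v:cp} already provides finitely generated orbit-undecidable subgroups of $\Aut(F_3)$, so one takes $H=F_{d-2}$ with $d-2\geqslant 3$, $K=F_2$, and $G=H\ast K=F_d$, extending generators exactly as above. Moreover, in the free-group case the orbit problem involves conjugacy, so the reduction requires that two elements of $H$ are conjugate in $G$ if and only if they are conjugate in $H$ (true because $H$ is a malnormal free factor) --- a condition your sketch never addresses, and which is condition (v) of the paper's transfer lemma (Proposition~\ref{p:general-free}).
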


This allows us to deduce the following strengthened version of
Theorem~\ref{t:main}.

\begin{theorem}\label{gros}
For every $d\geqslant 6$, there exists a finitely presented group $G$ simultaneously satisfying the following three
conditions:

i) $G$ is an automaton group,

ii) $G$ is $\ZZ^d$-by-(f.g.-free) (in fact, $G=\ZZ^d \rtimes_\phi F_m$, with injective action $\phi$),

iii) $G$ has unsolvable conjugacy problem.
\end{theorem}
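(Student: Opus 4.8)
The plan is to assemble Theorem~\ref{cor}, Proposition~\ref{gamma} and Proposition~\ref{p:free-orbit-undecidable}(a) around a single, well-chosen group. Fix $d\geqslant 6$. First I would invoke Proposition~\ref{p:free-orbit-undecidable}(a) to obtain a finitely generated, orbit undecidable, free subgroup $\Gamma\leqslant\GL_d(\ZZ)$, say of finite rank $m$, and fix an isomorphism $F_m\cong\Gamma$. Composing it with the inclusion $\Gamma\leqslant\GL_d(\ZZ)=\Aut(\ZZ^d)$ yields an injective action $\phi\colon F_m\to\Aut(\ZZ^d)$; I then set $G=\ZZ^d\rtimes_\phi F_m$ and argue that this one group realizes (i), (ii) and (iii) simultaneously.

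Checking the three properties is then essentially bookkeeping. For (ii): the semidirect product structure gives the short exact sequence $1\to\ZZ^d\to G\to F_m\to 1$, which exhibits $G$ as $\ZZ^d$-by-(finitely generated free), and the parenthetical claim that $\phi$ is injective is exactly how $\phi$ was built. For (i): $\Gamma$ is in particular an arbitrary finitely generated subgroup of $\GL_d(\ZZ)$, so Theorem~\ref{cor} applies directly and $G$ is an automaton group. For (iii): $H:=\ZZ^d$ is finitely generated and $\Gamma\leqslant\Aut(H)$ is finitely generated and orbit undecidable, so Proposition~\ref{gamma} gives that $G=H\rtimes\Gamma$ has unsolvable conjugacy problem.

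It remains to see that $G$ is finitely presented. Since $F_m$ is free, $G$ is presented by the $d$ generators $a_1,\dots,a_d$ of $\ZZ^d$ together with the $m$ free generators $x_1,\dots,x_m$, subject only to the $\binom{d}{2}$ commutator relations making $\langle a_1,\dots,a_d\rangle$ abelian and the $dm$ relations $x_j a_i x_j^{-1}=\phi(x_j)(a_i)$, where each right-hand side is the word in $a_1,\dots,a_d$ prescribed by the integer matrix $\phi(x_j)$; no relations among the $x_j$ are imposed. This presentation is finite.

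I do not expect a genuine obstacle inside this argument itself: all of the mathematical content has been pushed into Theorem~\ref{cor} (the Brunner--Sidki style realization of $\ZZ^d\rtimes\Gamma$ as an automaton group) and into Proposition~\ref{p:free-orbit-undecidable}(a) (the construction, by modifying~\cite{bogopolski-m-v:cp}, of a free, orbit undecidable subgroup of $\GL_d(\ZZ)$ — precisely the step that forces $d\geqslant 6$ instead of $d\geqslant 4$, since freeness, hence finite presentability of $G$, is what we need here and the Mikhailova-based examples of~\cite{bogopolski-m-v:cp} are not finitely presented). The only point requiring care is that the \emph{same} $\Gamma$, and hence the same $G$, be used throughout, so that (i)--(iii) hold for one group rather than for three different ones; the construction above does exactly this.
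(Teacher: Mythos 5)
Your proposal is correct and follows essentially the same route as the paper: take the orbit undecidable free subgroup $\Gamma\leqslant\GL_d(\ZZ)$ from Proposition~\ref{p:free-orbit-undecidable}(a), form $\ZZ^d\rtimes\Gamma$, get (i) from Theorem~\ref{cor} (the paper just unwinds this as Lemma~\ref{inv} plus Proposition~\ref{p:GM} applied to $G_{\M,n}$), and get (iii) from Proposition~\ref{gamma}. Your explicit finite presentation of the semidirect product is a correct extra detail that the paper leaves implicit.
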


\section{Orbit undecidability}\label{oud}

The main result in~\cite{bogopolski-m-v:cp} can be stated in the following way.

\begin{theorem}[Bogopolski, Martino, Ventura~\cite{bogopolski-m-v:cp}]\label{bmv}
Let $G=H\rtimes F$ be a semidirect product (with $F$, $H$, and so $G$, finitely generated) such that

(i) the conjugacy problem is solvable in $F$,

(ii) for every $f \in F$, $\langle f \rangle$ has finite index in the centralizer $C_F(f)$ and there is
an algorithm that, given $f$, calculates coset representatives for $\langle f \rangle$ in $C_F(f)$,

(iii) the twisted conjugacy problem is solvable in $H$.

\noindent
Then the following are equivalent:

(a) the conjugacy problem in $G$ is solvable,

(b) the conjugacy problem in $G$ restricted to $H$ is solvable,

(c) the action group $\{\lambda_g \mid g \in G\} \leqslant \Aut(H)$ is orbit decidable, where $\lambda_g$ denotes the
right conjugation by $g$, restricted to $H$. \qed
\end{theorem}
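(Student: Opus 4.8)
The plan is to work with the normal form of $G = H\rtimes F$. Write an element of $G$ as a pair $(h,f)$ with $h\in H$, $f\in F$, so that $(h_1,f_1)(h_2,f_2) = (h_1\,\phi_{f_1}(h_2),\,f_1f_2)$, where $\phi\colon F\to\Aut(H)$ denotes the defining action. Computing $(k,t)(h_1,f_1)(k,t)^{-1}$ directly shows that $(h_1,f_1)$ and $(h_2,f_2)$ are conjugate in $G$ if and only if there are $t\in F$ and $k\in H$ with $tf_1t^{-1} = f_2$ in $F$ and $h_2 = k\,\phi_t(h_1)\,\phi_{f_2}(k^{-1})$ in $H$. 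With this formula in hand, (a)$\Rightarrow$(b) is immediate, since (b) is a restriction of the problem in (a). For (b)$\Leftrightarrow$(c), specialize the formula to $f_1 = f_2 = 1$: then $(h_1,1)$ and $(h_2,1)$ are conjugate in $G$ precisely when $h_2 = k\,\phi_t(h_1)\,k^{-1} = \lambda_{(k,t)}(h_1)$ for some $(k,t)\in G$, that is, precisely when $h_2$ lies in the $\{\lambda_g\mid g\in G\}$-orbit of $h_1$. Hence the conjugacy problem of $G$ restricted to $H$ is literally the orbit membership problem for the subgroup $\{\lambda_g\mid g\in G\}\leqslant\Aut(H)$, which (being finitely generated, as $G$ is) is exactly what orbit decidability in (c) asks about.

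The substantive implication is (c)$\Rightarrow$(a). Given $(h_1,f_1)$ and $(h_2,f_2)$, first use (i) to decide whether $f_1$ and $f_2$ are conjugate in $F$; if not, the given elements are not conjugate in $G$. If they are, a conjugator $t_0$ with $t_0f_1t_0^{-1} = f_2$ can be found by exhaustive search, as the word problem in $F$ is solvable (a consequence of (i)). Replacing $(h_1,f_1)$ by its conjugate $(1,t_0)(h_1,f_1)(1,t_0)^{-1} = (\phi_{t_0}(h_1),f_2)$, we may assume $f_1 = f_2 =: f$. If $f = 1$ we are in the situation of (b)/(c) and we are done, so assume $f\neq1$. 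By the conjugacy formula, $(h_1,f)$ and $(h_2,f)$ are conjugate in $G$ if and only if there are $t\in C_F(f)$ and $k\in H$ with $h_2 = k\,\phi_t(h_1)\,\phi_f(k^{-1})$, i.e.\ if and only if $h_2$ is $\phi_f$-twisted conjugate in $H$ to $\phi_t(h_1)$ for some $t\in C_F(f)$.

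To control the (possibly infinite) set of admissible $t$, invoke (ii): compute coset representatives $t_1,\dots,t_n$ for $\langle f\rangle$ in $C_F(f)$, so every $t\in C_F(f)$ equals $f^m t_i$ for some $m\in\ZZ$ and $i\in\{1,\dots,n\}$, whence $\phi_t(h_1) = \phi_f^{\,m}\bigl(\phi_{t_i}(h_1)\bigr)$. The elementary but crucial observation is that, for every $v\in H$ and $m\in\ZZ$, the element $\phi_f^{\,m}(v)$ is $\phi_f$-twisted conjugate to $v$: the defining relation holds with conjugator $v^{-1}$ when $m = 1$, and, $\phi_f$-twisted conjugacy being an equivalence relation, the general case follows by iteration. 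Therefore $(h_1,f)$ and $(h_2,f)$ are conjugate in $G$ if and only if $h_2$ is $\phi_f$-twisted conjugate to $\phi_{t_i}(h_1)$ for some $i\in\{1,\dots,n\}$ --- a finite list of twisted conjugacy questions in $H$, each decidable by (iii) (the elements $\phi_{t_i}(h_1)$ and the automorphism $\phi_f$ being computable from the data of the semidirect product). This yields an algorithm for the conjugacy problem in $G$, completing the argument.

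I expect the main obstacle to be not any individual calculation but the structural point underlying (c)$\Rightarrow$(a): conditions (i)--(iii) alone already decide conjugacy for every pair of elements with nontrivial $F$-component, so the whole difficulty of the conjugacy problem in $G$ sits in the conjugacy classes contained in $H$, where it coincides with the orbit membership problem for $\{\lambda_g\mid g\in G\}$. Two technical points require care: condition (ii) is used --- and is only reasonable --- for $f\neq1$, since $\langle1\rangle$ has infinite index in $C_F(1) = F$ in the relevant situations; and one must check that $\{\lambda_g\mid g\in G\}$ is effectively finitely generated from a generating set of $G$, so that its orbit decidability is a well-posed question.
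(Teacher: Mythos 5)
Your argument is correct, but note that the paper does not actually prove this theorem: it is quoted from~\cite{bogopolski-m-v:cp} (hence the \qed right after the statement), with only the remark that $(a)\Rightarrow(b)\Leftrightarrow(c)$ are clear from the definitions and that the substantive content is $(c)\Rightarrow(a)$. What you have written is essentially a reconstruction of the original Bogopolski--Martino--Ventura proof of that implication: project to $F$ and use (i) to decide conjugacy of the $F$-components; conjugate so that both components equal some $f$; observe that admissible conjugators have $F$-part in $C_F(f)$; use (ii) to replace $C_F(f)$ by finitely many coset representatives of $\langle f\rangle$, using the key (and correctly verified) fact that $\phi_f^{\,m}(v)$ is $\phi_f$-twisted conjugate to $v$; and finish with finitely many instances of (iii), reserving the orbit-decidability hypothesis (c) exactly for the case $f=1$, where (ii) gives nothing. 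Two small points are worth tightening. First, your twisted conjugacy relation $h_2=k\,u\,\phi_f(k)^{-1}$ is the paper's relation $v=\varphi(x)^{-1}ux$ applied to $u^{-1},h_2^{-1}$ (take $x=k^{-1}$ and invert), so the two conventions give inter-reducible decision problems, but as written your appeal to (iii) silently switches conventions. Second, in $(b)\Leftrightarrow(c)$ the orbit problem as defined asks whether $\gamma(u)$ is \emph{conjugate in $H$} to $v$, not equal to it; the identification with the restricted conjugacy problem uses that the action group $\{\lambda_g \mid g\in G\}$ contains all inner automorphisms of $H$ (namely $\lambda_g$ for $g\in H$), which deserves a sentence. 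Your closing caveat about (ii) being meaningful only for $f\neq 1$ matches how the hypothesis is actually used in~\cite{bogopolski-m-v:cp}.
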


The \emph{conjugacy problem in $G$ restricted to $H$} asks if, given two elements $u$ and $v$ in $H$, there exists an
element $g$ in $G$ such that $u^g=v$. The \emph{orbit problem} for a subgroup $\Gamma$ of $\Aut(H)$ asks if, given $u$ and $v$ in $H$, there is an automorphism $\gamma$ in $\Gamma$ such that $\gamma(u)$ is conjugate to $v$ in $H$; we say that $\Gamma$ is \emph{orbit decidable (resp. undecidable)} if the orbit problem for $\Gamma$ is solvable (resp. unsolvable). Finally, the \emph{twisted conjugacy problem} for a group $H$ asks if, given an automorphism $\varphi \in
\Aut (H)$ and two elements $u,v\in H$, there is $x\in H$ such that $v=\varphi(x)^{-1}ux$.

The implications $(a) \Rightarrow (b) \Leftrightarrow (c)$ in Theorem~\ref{bmv} are clear from the definitions, and do not require most of the hypotheses (as indicated in~\cite{bogopolski-m-v:cp}, the only relevant implication is $(c)
\Rightarrow (a)$). Proposition~\ref{gamma}, which is needed for our purposes, is an obvious corollary. 

\begin{proposition}\label{p:general-free}
Let $G$ be a group and $H$ and $K$ be subgroups of $G$ such that 

(i) $G = \langle H, K \rangle$, 

(ii) the free group $F_2$ of rank 2 is a subgroup of $\Aut(K)$,

(iii) there exists a finitely generated orbit undecidable subgroup $\Gamma \leqslant \Aut(H)$, 

(iv) every pair of automorphisms $\alpha \in \Aut(H)$ and $\beta \in \Aut(K)$ has a (necessarily unique) common extension to an automorphism of $G$, and  

(v) two elements of $H$ are conjugate in $G$ if and only if they are conjugate in $H$. 

Then, $\Aut(G)$ contains finitely generated, orbit undecidable, free subgroups. 
\end{proposition}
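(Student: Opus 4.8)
The plan is to manufacture, out of the orbit undecidable subgroup $\Gamma \leqslant \Aut(H)$ furnished by hypothesis (iii), a free subgroup $\Delta$ of $\Aut(G)$ of the same rank which still ``remembers'' $\Gamma$ through its restriction to $H$, but which is free by virtue of its restriction to $K$. The point is that $\Gamma$ itself need not be free; the free structure is imported from $\Aut(K)$ via hypothesis (ii).

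First I would fix generators $\alpha_1, \dots, \alpha_n$ of $\Gamma$ and, using (ii) together with the fact that a free group of any finite rank embeds in $F_2$, choose $\beta_1, \dots, \beta_n \in \Aut(K)$ forming a free basis of a free subgroup of $\Aut(K)$ of rank $n$. By hypothesis (iv) --- this is where (i) is used, to guarantee uniqueness of the common extension --- each pair $(\alpha_i,\beta_i)$ extends to a single automorphism $\gamma_i \in \Aut(G)$; set $\Delta = \langle \gamma_1, \dots, \gamma_n \rangle \leqslant \Aut(G)$. Since each $\gamma_i$ carries $H$ onto $H$ and $K$ onto $K$, every element of $\Delta$ preserves $H$ and $K$ setwise, so restriction defines homomorphisms $R_H \colon \Delta \to \Aut(H)$ and $R_K \colon \Delta \to \Aut(K)$ with $R_H(\gamma_i) = \alpha_i$ and $R_K(\gamma_i) = \beta_i$. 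Hence $R_H(\Delta) = \Gamma$, while $R_K$ sends the generating tuple $(\gamma_1,\dots,\gamma_n)$ to a free basis; composing $R_K$ with the canonical surjection $F_n \twoheadrightarrow \Delta$, $x_i \mapsto \gamma_i$, gives the identity of $F_n$, so that surjection is in fact an isomorphism, $\Delta$ is free of rank $n$ with basis $\{\gamma_i\}$, and $R_H \colon \Delta \twoheadrightarrow \Gamma$.

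It then remains to transfer orbit undecidability from $\Gamma$ to $\Delta$. Given $u, v \in H$, regarded as elements of $G$ (a word in generators of $H$ being, by (i), a word in generators of $G$), I claim that $v$ lies in the $\Delta$-orbit of $u$ up to conjugacy in $G$ if and only if $v$ lies in the $\Gamma$-orbit of $u$ up to conjugacy in $H$. For the ``if'' direction: given $\gamma \in \Gamma$ with $\gamma(u)$ conjugate to $v$ in $H$, pick $\delta \in \Delta$ with $R_H(\delta) = \gamma$; since $u \in H$ we have $\delta(u) = \gamma(u)$, which is conjugate to $v$ in $H \leqslant G$. For the ``only if'' direction: given $\delta \in \Delta$ with $\delta(u)$ conjugate to $v$ in $G$, put $\gamma = R_H(\delta) \in \Gamma$; then $\delta(u) = \gamma(u) \in H$, so $\gamma(u)$ and $v$ are two elements of $H$ conjugate in $G$, hence --- and this is exactly where hypothesis (v) enters --- conjugate in $H$. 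This equivalence is an (essentially identity) reduction of the orbit problem for $\Gamma$ to the orbit problem for $\Delta$, so the latter is unsolvable, and $\Delta$ is the desired finitely generated, orbit undecidable, free subgroup of $\Aut(G)$.

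The only load-bearing ingredients are (ii) and (v): (ii) is precisely what prevents $\Delta$ from collapsing and forces it to be free of rank $n$, since without a free companion action on $K$ a relation among the $\gamma_i$ could survive; and (v) is what lets $G$-conjugacy between images in $H$ be detected inside $H$, so that the reduction is faithful in both directions. Everything else is bookkeeping about restriction homomorphisms. (As is implicit in the definition of orbit decidability, we assume $G$ finitely generated; in the intended applications $K$, and hence $G = \langle H, K \rangle$, is finitely generated.)
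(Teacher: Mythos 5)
Your proof is correct and follows essentially the same route as the paper's: extend generators of $\Gamma$ and a free basis in $\Aut(K)$ simultaneously via (iv), deduce freeness of the extended subgroup from its restriction to $K$, and transfer orbit undecidability by the two-way reduction on inputs $u,v\in H$ using (v). The only differences are cosmetic (explicit restriction homomorphisms $R_H,R_K$ instead of the paper's remark that words in the $s_i$ and in the $g_i$ agree on $H$).
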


\begin{proof}
Let $\Gamma = \langle g_1,\ldots,g_m \rangle$ be an orbit undecidable subgroup of $\Aut(H)$ and $F=\langle f_1,\dots,f_m \rangle$ be a free subgroup of rank $m$ of $\Aut(K)$. For, $i=1,\dots,m$, let $s_i$ be the common extension of $g_i$ and $f_i$ to an automorphism of $G$ and let $\Gamma' = \langle s_1,\dots,s_m \rangle \leqslant \Aut(G)$. Since $F$ is free of rank $m$, so is $\Gamma'$.  

Moreover, $\Gamma'$ is orbit undecidable subgroup of $\Aut(G)$. Indeed, for $u,v \in H$, 
\begin{align*}
 (\exists \gamma' \in \Gamma')(\exists t' \in G)~\gamma'(u)=v^{t'} 
   &\iff (\exists \gamma \in \Gamma)(\exists t' \in G)~\gamma(u)=v^{t'} \iff \\
   &\iff (\exists \gamma \in \Gamma)(\exists t \in H)~\gamma(u)=v^{t}
\end{align*}
The second equivalence follows from (v), since $\gamma(u), v \in H$. The first comes from the construction, since, for every group word $w(x_1,\dots,x_m)$, the automorphisms $\gamma'=w(s_1,\dots,s_m) \in \Gamma'$ and $\gamma=w(g_1,\dots,g_m) \in \Gamma$ agree on $H$. Therefore, the orbit problem for the instance $u,\, v\in H$ with respect to $\Gamma \leqslant \Aut(H)$ is equivalent to the orbit problem for the instance $u,v\in H\leqslant G$ with respect to $\Gamma' \leqslant \Aut(G)$, showing that an algorithm that would solve the orbit problem for $\Gamma'$ could be used to solve the orbit problem for $\Gamma$ as well. Thus $\Gamma'$ is orbit undecidable. 
\end{proof}

\begin{proof}[Proof of Proposition~\ref{p:free-orbit-undecidable}] 
(a) For $d \geq 6$, let $G=\ZZ^d$, $H=\ZZ^{d-2}$, $K=\ZZ^2$, and $G=H \oplus K$. All requirements of Proposition~\ref{p:general-free} are satisfied. In particular, (iii) holds by~\cite[Proposition~7.5]{bogopolski-m-v:cp}, and (v) holds since conjugacy is the same as equality in both $H$ and $G$. 

(b) For $d \geq 5$, let $G=F_d$, $H=F_{d-2}$, $K=F_2$, and $G=H \ast K$. All requirements of Proposition~\ref{p:general-free} are satisfied. In particular, (iii) holds by~\cite[Subsection~7.2]{bogopolski-m-v:cp}, and (v) holds since the free factor $H$ is malnormal in $G$. 
\end{proof}

\section{Self-similar groups and automaton groups}\label{ssg}

Let $X$ be a finite alphabet on $k$ letters. The set $X^*$ of words over $X$ has the structure of a rooted $k$-ary tree in which the empty word is the root and each vertex $u$ has $k$ children, namely the vertices $ux$, for $x$ in $X$. Every tree automorphism fixes the root and permutes the words of the same length (constituting the levels of the rooted tree) while preserving the tree structure.

Let $g$ be a tree automorphism. The action of $g$ on $X^*$ can be decomposed as follows. There is a permutation $\pi_g$ of $X$, called the \emph{root permutation} of $g$,  determined by the permutation that $g$ induces on the subtrees below the root (the action of $g$ on the first letter in every word), and tree automorphisms $g|_x$, for $x$ in $X$, called the \emph{sections} of $g$, determined by the action of $g$ within these subtrees (the action of $g$ on the rest of the word behind the first letter). Both the root permutation and the sections are uniquely determined by the equality
\begin{equation}\label{e:gxw}
 g(xw) = \pi_g(x)g|_x(w),
\end{equation}
for $x$ in $X$ and $w$ in $X^*$.

A group or a set of tree automorphisms is \emph{self-similar} if it contains all sections of all of its
elements. A \emph{finite automaton} is a finite self-similar set. A group $G(\A)$ of tree automorphisms generated by a finite self-similar set $\A$ is itself self-similar and it is called an \emph{automaton group} (realized or generated by the automaton $\A$). The elements of the automaton are often referred to as \emph{states} of the automaton and the automaton is said to operate on the alphabet $X$.

The \emph{boundary} of the tree $X^*$ is the set $X^\omega$ of right infinite words $x_1x_2x_3\cdots$. The tree structure induces a metric on $X^\omega$ inducing the Cantor set topology. The metric is given by $d(u,v) = \frac{1}{2^{|u \wedge v|}}$, for $u \neq v$, where $|u \wedge v|$ denotes the length of the longest common prefix of $u$ and $v$. The group of isometries of the boundary $X^\omega$ and the group of tree automorphism of $X^*$ are canonically isomorphic. Every isometry induces a tree automorphism by restricting the action on finite prefixes, and every tree automorphism induces an isometry on the boundary through an obvious limiting process. The decomposition formula~(\ref{e:gxw}) for the action of tree automorphisms is valid for boundary isometries as well ($w$ is any right infinite word in this case).

\section{Automaton groups with unsolvable conjugacy problem}\label{feina}

Let $\M=\{ M_1,\ldots,M_m \}$ be a set of integer $d\times d$ matrices with non-zero determinants. Let $n\geqslant 2$ be relatively prime to all of these determinants (thus, each $M_i$ is invertible over the ring $\ZZ_n$ of $n$-adic integers. For an integer matrix $M$ and an arbitrary vector $\vec{v}$ with integer coordinates, consider the
invertible affine transformation $M_\vec{v} \colon \ZZ_n^d \to \ZZ_n^d$ given by $M_\vec{v}(\vec{u})=\vec{v}+M\vec{u}$,
and let
\[
G_{\M, n} =\langle \{M_\vec{v} \mid M \in \M, \ \vec{v} \in \ZZ^d\} \rangle
\]
be the subgroup of $\Aff_d(\ZZ_n)$ generated by all the transformations of the form $M_\vec{v}$, for $M\in \M$ and
$v\in \ZZ^d$. Denote by $\tau_\vec{v}$ the translation $\ZZ_n^d \to \ZZ_n^d$, $\vec{u} \mapsto
\vec{v}+\vec{u}$, and by $\vec{e}_i$ the $i$-th standard basis vector. Since $M_{\vec{v}} =\tau_{\vec{v}} M_{\vec{0}}$, we have
\begin{equation}\label{e:generators}
 G_{\M,n} =\langle \{M_\vec{0} \mid M \in \M\} \cup \{\tau_{\vec{e}_i} \mid i=1,\ldots,d\} \rangle \leqslant \Aff_d(\ZZ_n). 
\end{equation} 

\begin{lemma}\label{inv}
If all matrices in $\M$ are invertible over $\ZZ$, then $G_{\M,n} \cong \ZZ^d \rtimes \Gamma$,
where $\Gamma=\langle \M \rangle \leqslant \GL_d(\ZZ)$; in particular, $G_{\M,n}$ does not depend on $n$.
\end{lemma}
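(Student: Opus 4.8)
The plan is to exhibit $G_{\M,n}$ as an internal semidirect product inside the affine group $\Aff_d(\ZZ_n) \cong \ZZ_n^d \rtimes \GL_d(\ZZ_n)$. Since every $M \in \M$ is invertible over $\ZZ$, both $M$ and $M^{-1}$ have integer entries; hence $\Gamma = \langle \M \rangle \leqslant \GL_d(\ZZ)$, and, as $\ZZ$ embeds into $\ZZ_n$, the group $\Gamma$ sits inside $\GL_d(\ZZ_n)$ and preserves the sublattice $\ZZ^d \subseteq \ZZ_n^d$. From the composition rule $M_{\vec{0}}\, M'_{\vec{0}} = (MM')_{\vec{0}}$ one sees that $M \mapsto M_{\vec{0}}$ is an injective homomorphism $\GL_d(\ZZ_n) \hookrightarrow \Aff_d(\ZZ_n)$ splitting the linear-part projection $p \colon \Aff_d(\ZZ_n) \to \GL_d(\ZZ_n)$. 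Write $\Gamma_0 = \{M_{\vec{0}} \mid M \in \Gamma\} \cong \Gamma$ for the image of $\Gamma$ under this section, and $T = \{\tau_{\vec{v}} \mid \vec{v} \in \ZZ^d\} \cong \ZZ^d$ for the group of integer translations.

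Next I would record two structural facts. A direct computation gives $M_{\vec{0}}\, \tau_{\vec{v}}\, M_{\vec{0}}^{-1} = \tau_{M\vec{v}}$ for $M \in \Gamma$ and $\vec{v} \in \ZZ^d$; since $M$ and $M^{-1}$ are integral, $M\vec{v}$ runs over all of $\ZZ^d$, so $\Gamma_0$ normalizes $T$ and acts on it exactly by the natural action of $\Gamma \leqslant \GL_d(\ZZ)$ on $\ZZ^d$. Moreover $T \cap \Gamma_0 = \{\mathrm{id}\}$, because a translation whose linear part is trivial must be the identity. Consequently $T\,\Gamma_0$ is a subgroup of $\Aff_d(\ZZ_n)$, and it is internally the semidirect product $T \rtimes \Gamma_0 \cong \ZZ^d \rtimes \Gamma$.

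Finally I would identify $G_{\M,n}$ with $T \rtimes \Gamma_0$. On one hand, $T \rtimes \Gamma_0$ is a subgroup of $\Aff_d(\ZZ_n)$ containing every generator $M_{\vec{0}}$ ($M \in \M$) and $\tau_{\vec{e}_i}$ appearing in~(\ref{e:generators}), so $G_{\M,n} \leqslant T \rtimes \Gamma_0$; this inclusion is what rules out any ``extra'' translations by non-integer vectors. On the other hand, $G_{\M,n}$ contains $\langle \tau_{\vec{e}_1},\dots,\tau_{\vec{e}_d}\rangle = T$ and, because $M \mapsto M_{\vec{0}}$ is a homomorphism, $\langle M_{\vec{0}} \mid M \in \M \rangle = \{M_{\vec{0}} \mid M \in \Gamma\} = \Gamma_0$, hence $G_{\M,n} \geqslant \langle T, \Gamma_0 \rangle = T \rtimes \Gamma_0$. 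Therefore $G_{\M,n} = T \rtimes \Gamma_0 \cong \ZZ^d \rtimes \Gamma$, and since the right-hand side is described purely in terms of $\Gamma \leqslant \GL_d(\ZZ)$ and its natural action, it does not involve $n$.

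There is no serious obstacle here — the argument is essentially a bookkeeping of the affine structure. The one subtlety is to ensure that $G_{\M,n}$ does not contain translations by vectors outside $\ZZ^d$, and this is handled cleanly by the semidirect-product packaging above (rather than by analyzing $\ker(p|_{G_{\M,n}})$ directly). The hypothesis $\M \subseteq \GL_d(\ZZ)$ enters precisely, and only, in the verification that $\Gamma_0$ normalizes $T$, where integrality of $M^{-1}$ — not merely of $M$ — is needed.
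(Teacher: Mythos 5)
Your proof is correct and follows essentially the same route as the paper: identify the integer translation subgroup $T\cong\ZZ^d$ and the linear copy $\Gamma_0\cong\Gamma$ via $M\mapsto M_{\vec{0}}$, verify the conjugation formula $M_{\vec{0}}\tau_{\vec{v}}M_{\vec{0}}^{-1}=\tau_{M\vec{v}}$ (this is exactly the paper's relation~(\ref{e:relation}), written vectorially) together with trivial intersection, and conclude the internal semidirect product from the generating set~(\ref{e:generators}). The only cosmetic difference is that you carry out the bookkeeping inside $\Aff_d(\ZZ_n)$, whereas the paper first uses invertibility over $\ZZ$ to restrict the whole group to $\Aff_d(\ZZ)$; also, your justification of $T\cap\Gamma_0=\{\mathrm{id}\}$ is phrased loosely (a translation always has trivial linear part --- what you mean is that an element of the intersection is a linear map fixing $\vec{0}$ and a translation, hence trivial), but this is a slip of wording, not a gap.
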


\begin{proof}
If $M$ is an invertible matrix over $\ZZ$, and $v\in \ZZ^d$, then $M_{\vec{v}}\in \Aff_d(\ZZ_n)$ restricts
to a bijective affine transformation $M_{\vec{v}}\in \Aff_d(\ZZ)$. Hence, we can view $G_{\M,n}$ as a
subgroup of $\Aff_d(\ZZ)$ and, in particular, it is independent from $n$; let us denote it by $G_{\M}$.

Clearly, the subgroup of translations $T=\langle \tau_{\vec{e}_1}, \ldots, \tau_{\vec{e}_d}\rangle$ of $G_{\M}$ is free abelian of rank $d$, $T\simeq \ZZ^d$. Since each of the transformations $M_\vec{0}$, for $M \in \M$,  acts on $\ZZ^d$ by multiplication by $M$, the subgroup $\langle M_\vec{0} \mid M \in \M \rangle$ of $G_\M$ is isomorphic to $\Gamma$ and may be safely identified with it. The subgroups $T$ and $\Gamma$ intersect trivially, since every nontrivial element of $T$ moves the zero vector in $\ZZ^d$, while no element of $\Gamma$ does. For $M \in \M \cup \M^{-1}$ (where $\M^{-1}$ is the set of integer matrices inverse to the matrices in $\M$), $j=1,\ldots,d$, and $\vec{u} \in \ZZ^d$,
 $$
M_\vec{0} \tau_{\vec{e}_j} (M_\vec{0})^{-1}(\vec{u}) = M_\vec{0} \tau_{\vec{e}_j} (M^{-1}\vec{u}) = M_\vec{0}(\vec{e}_j
+ M^{-1}\vec{u}) = M\vec{e_j} + \vec{u} =
 $$
 $$
=\tau_{\vec{e}_1}^{m_{1,j}}\tau_{\vec{e}_2}^{m_{2,j}}\cdots \tau_{\vec{e}_d}^{m_{d,j}}(\vec{u}),
 $$
where $m_{i,j}$ is the $(i,j)$-entry of $M$. Therefore, 
for $M \in \M \cup \M^{-1}$ and $j=1,\dots,d$,
\begin{equation}\label{e:relation}
M_\vec{0} \tau_{\vec{e}_j} (M_\vec{0})^{-1} = \tau_{\vec{e}_1}^{m(1,j)}\tau_{\vec{e}_2}^{m(2,j)}\cdots
\tau_{\vec{e}_d}^{m(d,j)}. 
\end{equation}
It follows that the subgroup $T\cong \ZZ^d$ is normal in $G_\M$ and $G_\M \cong \ZZ^d \rtimes \Gamma$. 
\end{proof}

\begin{remark}
The equality~(\ref{e:relation}) is correct (over $\ZZ_n$) for any integer matrix with non-zero determinant relatively prime to $n$. When $\M=\{M\}$ consists of a single $d\times d$ integer matrix $M=(m_{i,j})$ of infinite order and determinant $k\neq 0$ relatively prime to $n$, the multiplication by $M$ embeds $\ZZ^d$ into an index $|k|$ subgroup of $\ZZ^d$ and $G_{\M,n}$ is the ascending HNN extension of $\ZZ^d$ by a single stable letter (see~\cite{bartholdi-s:bs}), i.e.,
\[
G_{\M,n} \cong \langle \ a_1,\ldots,a_d, t \mid [a_i,a_j]=1,~ta_jt^{-1} = a_1^{m_{1,j}}\cdots
a_d^{m_{d,j}},~\mbox{for}~1\leqslant i,j\leqslant d \ \rangle.
\]
\end{remark}

The goal now is to show that the groups $G_{\M,n}$ constructed in this way, can all be realized by finite automata and so, they are automaton groups.

The elements of the ring $\ZZ_n$ may be (uniquely) represented as right infinite words over the alphabet $Y_n=
\{0,\ldots,n-1\}$, through the correspondence
 $$
y_1y_2y_3 \cdots \quad \longleftrightarrow \quad  y_1 + y_2\cdot n + y_3\cdot n^2 + \cdots,
 $$
while the elements of the free $d$-dimensional module $\ZZ_n^d$, viewed as column vectors, may be (uniquely)
represented as right infinite words over the alphabet $X_n =Y_n^d=\{ (y_1,\ldots,y_d)^T \mid y_i \in Y_n, \
i=1,\ldots,d\}$ consisting of column vectors with entries in $Y_n$. Note that $|Y_n |=n$ and $|X_n |=n^d$.

For a vector $\vec{v}$ with integer coordinates define $\Mod(\vec{v})$ and $\Div(\vec{v})$ to be the vectors whose
coordinates are the remainders and the quotients, respectively, obtained by dividing the coordinates of $\vec{v}$ by
$n$, i.e., the unique integer vectors satisfying $\vec{v}=\Mod(\vec{v})+n\Div(\vec{v})$, with $\Mod(\vec{v})\in X_n$. 

\begin{lemma}
For every vector $\vec{v}$ with integer coordinates, and every element $\vec{x}_1\vec{x}_2\vec{x}_3\ldots$ in the free module $\ZZ_n^d$ (where $\vec{x}_1,\vec{x}_2,\vec{x}_3,\ldots$ are symbols in $X_n$), 
\begin{equation}\label{e:Mv}
M_\vec{v}(\vec{x}_1\vec{x}_2\vec{x}_3\cdots) = \Mod(\vec{v}+M\vec{x}_1) +
nM_{\Div(\vec{v}+M\vec{x}_1)}(\vec{x}_2\vec{x}_3\vec{x}_4\cdots). 
\end{equation}
\end{lemma}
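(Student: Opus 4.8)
The plan is to verify equation~(\ref{e:Mv}) by a direct computation that simply tracks how the affine transformation $M_\vec{v}$ acts on an element of $\ZZ_n^d$ written in its $n$-adic (base-$n$) expansion, and then reconciles the result with the general section formula~(\ref{e:gxw}) for tree automorphisms. Concretely, I would start from the left-hand side: if $\vec{u} = \vec{x}_1\vec{x}_2\vec{x}_3\cdots$ represents $\vec{x}_1 + n(\vec{x}_2\vec{x}_3\cdots)$ in $\ZZ_n^d$, then $M_\vec{v}(\vec{u}) = \vec{v} + M\vec{u} = (\vec{v} + M\vec{x}_1) + nM(\vec{x}_2\vec{x}_3\cdots)$.

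The key step is to handle the "carry'': the vector $\vec{v} + M\vec{x}_1$ has integer coordinates but need not lie in $X_n$, so I would invoke the definition of $\Mod$ and $\Div$ to write $\vec{v} + M\vec{x}_1 = \Mod(\vec{v}+M\vec{x}_1) + n\Div(\vec{v}+M\vec{x}_1)$, where $\Mod(\vec{v}+M\vec{x}_1) \in X_n$ is the legitimate first letter. Substituting this in gives
\[
M_\vec{v}(\vec{u}) = \Mod(\vec{v}+M\vec{x}_1) + n\bigl(\Div(\vec{v}+M\vec{x}_1) + M(\vec{x}_2\vec{x}_3\cdots)\bigr),
\]
and the inner expression $\Div(\vec{v}+M\vec{x}_1) + M(\vec{x}_2\vec{x}_3\cdots)$ is, by definition, exactly $M_{\Div(\vec{v}+M\vec{x}_1)}$ applied to $\vec{x}_2\vec{x}_3\cdots$. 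This is precisely the right-hand side of~(\ref{e:Mv}), so the identity holds as an equality of elements of $\ZZ_n^d$.

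Finally, I would note that this computation also exhibits the promised automaton structure: comparing~(\ref{e:Mv}) with~(\ref{e:gxw}), the root permutation of $M_\vec{v}$ sends the letter $\vec{x}_1 \in X_n$ to $\Mod(\vec{v}+M\vec{x}_1)$, and the section of $M_\vec{v}$ at $\vec{x}_1$ is $M_{\Div(\vec{v}+M\vec{x}_1)}$. The only genuinely delicate point — though it is routine rather than hard — is making sure the base-$n$ arithmetic is carried out in $\ZZ_n^d$ coordinatewise and that $\Mod(\vec{v}+M\vec{x}_1)$ is the unique representative in $X_n$, so that the decomposition matches the uniqueness in~(\ref{e:gxw}); everything else is substitution. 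I would present the argument as a short displayed chain of equalities with a sentence identifying the carry term, and leave the identification of root permutation and sections for the subsequent discussion of finiteness of the automaton.
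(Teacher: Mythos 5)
Your proposal is correct and follows essentially the same computation as the paper: expand $\vec{x}_1\vec{x}_2\cdots$ as $\vec{x}_1+n(\vec{x}_2\vec{x}_3\cdots)$, apply $M_\vec{v}$, split the carry via $\vec{v}+M\vec{x}_1=\Mod(\vec{v}+M\vec{x}_1)+n\Div(\vec{v}+M\vec{x}_1)$, and factor out $n$ to recognize $M_{\Div(\vec{v}+M\vec{x}_1)}$ acting on the tail. The extra remarks identifying the root permutation and sections are consistent with what the paper does separately in the definition of $\A_{M,n}$ and Lemma~\ref{l:mvMv}, but are not needed for this lemma.
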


\begin{proof}
Indeed, 
\begin{align*}
 M_\vec{v}(\vec{x}_1\vec{x}_2\vec{x}_3\cdots) 
  &= \vec{v}+M\vec{x}_1\vec{x}_2\vec{x}_3\cdots = \vec{v}+M(\vec{x}_1 + n(\vec{x}_2\vec{x}_3\vec{x}_4\cdots)) \\ 
  &= \vec{v}+M\vec{x}_1 + nM\vec{x}_2\vec{x}_3\vec{x}_4\cdots \\   
  &= \Mod(\vec{v}+M\vec{x}_1) + n\Div(\vec{v}+M\vec{x}_1) + nM\vec{x}_2\vec{x}_3\vec{x}_4\cdots \\   
  &= \Mod(\vec{v}+M\vec{x}_1) + n(\Div(\vec{v}+M\vec{x}_1) + M\vec{x}_2\vec{x}_3\vec{x}_4\cdots) \\   
  &= \Mod(\vec{v}+M\vec{x}_1) + nM_{\Div(\vec{v} + M\vec{x}_1)}(\vec{x}_2\vec{x}_3\vec{x}_4\cdots). 
\end{align*} 
\end{proof}

Let $||M||$ be the maximal absolute row sum norm of $M$, i.e. $||M||=\max_i \sum_{j=1}^d |m_{i,j}|$, where $m_{i,j}$ is
the $(i,j)$-entry of $M$. Define $V_M$ to be the finite set of integer vectors $\vec{v}$ for which each coordinate is between $-||M||$ and $||M||-1$, inclusive. Note that $V_M$ is finite and contains $(2||M||)^d$ vectors.

\begin{definition}
For an integer matrix $M$, define an automaton $\A_{M,n}$ operating on the alphabet $X_n$ as follows: the set of
states is $S_{M,n}=\{m_{\vec{v}} \mid \vec{v} \in V_M \}$, and the root permutations and the sections are, for $\vec{x}$ in $X_n$,  defined by
\begin{equation}\label{e:mv-root-section}
 m_\vec{v} (\vec{x}) = \Mod(\vec{v}+M\vec{x}) \qquad \text{and} \qquad
 m_\vec{v}|_{\vec{x}} = m_{\Div(\vec{v}+M\vec{x})}.
\end{equation}
\end{definition}

The automaton $\A_{M,n}$ is well defined (it is easy to show that, for $\vec{v}\in V_M$ and $\vec{x}\in X_n$, the entries of the vector $\vec{v}+M\vec{x}$ are bounded between $-||M||n$ and $||M||n-1$, and hence $\Div(\vec{v}+M\vec{x}) \in V_M$). 

\begin{lemma}\label{l:mvMv}
For every state $m_{\vec{v}}$ of the automaton $\A_{M,n}$, and every element
$\vec{u}=\vec{x}_1\vec{x}_2\vec{x}_3\cdots$ of the free module $\ZZ_n^d$ (i.e. every right infinite word over $X_n$),
\[  m_{\vec{v}}(\vec{u}) = M_\vec{v} (\vec{u}). \]
\end{lemma}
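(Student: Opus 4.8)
The plan is to verify that the boundary isometry defined by the state $m_{\vec v}$ and the affine transformation $M_{\vec v}$ obey exactly the same recursive decomposition along the tree $X_n^*$, and then to invoke the fact that an isometry of $X_n^\omega$ (equivalently, a tree automorphism of $X_n^*$) is determined by its restrictions to finite prefixes.

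First I would record the two one-step decompositions. Unravelling the decomposition formula~(\ref{e:gxw}) for the automaton $\A_{M,n}$ by means of~(\ref{e:mv-root-section}), one obtains, for $\vec x \in X_n$ and any right infinite word $\vec w$ over $X_n$,
\[
  m_{\vec v}(\vec x \vec w) = \Mod(\vec v + M\vec x)\, m_{\Div(\vec v + M\vec x)}(\vec w).
\]
On the other hand, reading the right-hand side of~(\ref{e:Mv}) in word notation (by the digit correspondence, $\Mod(\vec v + M\vec x_1) + n\,\vec z$ is exactly the concatenation of the letter $\Mod(\vec v + M\vec x_1) \in X_n$ with the word $\vec z \in \ZZ_n^d$), one obtains, writing $\vec u = \vec x \vec w$,
\[
  M_{\vec v}(\vec x \vec w) = \Mod(\vec v + M\vec x)\, M_{\Div(\vec v + M\vec x)}(\vec w).
\]
Thus $m_{\vec v}$ and $M_{\vec v}$ produce the same first letter, and what follows is governed in both cases by a transformation of the same kind indexed by the same vector $\Div(\vec v + M\vec x)$; by the remark following the definition of $\A_{M,n}$, this vector again lies in $V_M$, so that $m_{\Div(\vec v + M\vec x)}$ is genuinely a state of $\A_{M,n}$.

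Next I would prove, by induction on $k \geqslant 0$, the following uniform statement: for every $\vec v \in V_M$ and every $\vec u \in \ZZ_n^d$, the length-$k$ prefixes of $m_{\vec v}(\vec u)$ and $M_{\vec v}(\vec u)$ coincide. The case $k = 0$ is vacuous. For the inductive step, write $\vec u = \vec x \vec w$ with $\vec x \in X_n$; by the two displayed identities, both $m_{\vec v}(\vec u)$ and $M_{\vec v}(\vec u)$ begin with the letter $\Mod(\vec v + M\vec x)$, and the remaining right infinite words are $m_{\vec v'}(\vec w)$ and $M_{\vec v'}(\vec w)$ with $\vec v' = \Div(\vec v + M\vec x) \in V_M$. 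The inductive hypothesis, applied to $k - 1$, $\vec v'$ and $\vec w$, says these share their length-$(k-1)$ prefixes, whence $m_{\vec v}(\vec u)$ and $M_{\vec v}(\vec u)$ share their length-$k$ prefixes. Since $k$ is arbitrary, $m_{\vec v}(\vec u)$ and $M_{\vec v}(\vec u)$ agree on all finite prefixes and hence coincide in $X_n^\omega$ for every $\vec u$; equivalently, through the canonical identification recalled in Section~\ref{ssg}, $m_{\vec v}$ and $M_{\vec v}$ are the same tree automorphism of $X_n^*$. I do not anticipate a real obstacle: the only delicate point is that the recursion must not leave the finite state set, i.e.\ that $\Div(\vec v + M\vec x) \in V_M$ whenever $\vec v \in V_M$, and this is precisely the well-definedness of $\A_{M,n}$ already established just after its definition.
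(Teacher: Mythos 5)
Your proposal is correct and follows essentially the same route as the paper: the paper's proof consists precisely of observing that the one-step decomposition~(\ref{e:mv-root-section}) of $m_{\vec{v}}$ matches the recursion~(\ref{e:Mv}) for $M_{\vec{v}}$, with your induction on prefix lengths and the check that $\Div(\vec{v}+M\vec{x})\in V_M$ simply making explicit what the paper leaves implicit.
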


\begin{proof}
Follows directly from the definition of the root permutations and the sections of $m_{\vec{v}}$ in~(\ref{e:mv-root-section}) and equality~(\ref{e:Mv}) describing the action of $M_{\vec{v}}$.
\end{proof}

\begin{definition}
Let $\A_{\M,n}$ be the automaton operating on the alphabet $X_n$ and having $2^d\sum_{i=1}^m ||M_i||^d$ states obtained
by taking the (disjoint) union of the automata $\A_{M_1,n},\ldots,\A_{M_m,n}$.
\end{definition}

\begin{proposition}\label{p:GM}
The group $G_{\M,n}$ can be realized by a finite automaton acting on an alphabet of size $n^d$
and having no more than $2^d\sum_{i=1}^m ||M_i||^d$ states, where $||M_i||$ is the maximum absolute row sum norm of
$M_i$, for $i=1,\ldots,m$.
\end{proposition}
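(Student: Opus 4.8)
The plan is to show that the automaton $\A_{\M,n}$ itself realizes $G_{\M,n}$, which immediately gives the stated bounds. First I would verify that $\A_{\M,n}$ is a genuine finite automaton, i.e., a finite self-similar set. It is finite, with $\sum_{i=1}^m |V_{M_i}| = \sum_{i=1}^m (2||M_i||)^d = 2^d\sum_{i=1}^m ||M_i||^d$ states, and it operates on the alphabet $X_n$ of size $n^d$. It is self-similar because each $\A_{M_i,n}$ is: the section formula in~(\ref{e:mv-root-section}) reads $m_{\vec{v}}|_{\vec{x}} = m_{\Div(\vec{v}+M\vec{x})}$, and $\Div(\vec{v}+M\vec{x}) \in V_M$ by the observation following the definition of $\A_{M,n}$; and a disjoint union of self-similar sets is self-similar. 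Hence $G(\A_{\M,n})$ is an automaton group realized by an automaton with $n^d$ letters and $2^d\sum_{i=1}^m ||M_i||^d$ states, and it remains to prove $G(\A_{\M,n}) = G_{\M,n}$.

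For the inclusion $G(\A_{\M,n}) \leqslant G_{\M,n}$ I would use Lemma~\ref{l:mvMv}: under the canonical identification of tree automorphisms of $X_n^*$ with isometries of $\ZZ_n^d = X_n^\omega$, every state $m_{\vec{v}}$ of $\A_{\M,n}$ equals the affine transformation $M_{\vec{v}}$, which lies in $G_{\M,n}$ by definition. Thus all the generators (the states) of $G(\A_{\M,n})$ lie in $G_{\M,n}$.

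For the reverse inclusion I would exhibit the generating set from~(\ref{e:generators}) inside $G(\A_{\M,n})$. Since each $M \in \M$ has nonzero determinant, $M \neq 0$ and hence $||M|| \geqslant 1$; therefore $\vec{0}$ and $-\vec{e}_j$ (for $j = 1,\ldots,d$) all have coordinates in the range $[-||M||, ||M||-1]$ and so lie in $V_M$. Consequently $m_{\vec{0}}$ and $m_{-\vec{e}_j}$ are states of $\A_{\M,n}$, realizing $M_{\vec{0}}$ and $M_{-\vec{e}_j} = \tau_{-\vec{e}_j} M_{\vec{0}}$, respectively. Thus $M_{\vec{0}} \in G(\A_{\M,n})$ for each $M \in \M$, and $\tau_{\vec{e}_j} = (m_{-\vec{e}_j}\,(m_{\vec{0}})^{-1})^{-1}$ (which acts as $\tau_{-\vec{e}_j}^{-1}$) lies in $G(\A_{\M,n})$ for each $j = 1,\ldots,d$. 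By~(\ref{e:generators}) these elements generate $G_{\M,n}$, so $G_{\M,n} \leqslant G(\A_{\M,n})$. Combining the two inclusions yields $G(\A_{\M,n}) = G_{\M,n}$, completing the proof.

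All these verifications are routine; the only step requiring a little care --- and the one explaining the precise shape of $V_M$ --- is recovering the translations $\tau_{\vec{e}_j}$, which needs $V_M$ to contain $\vec{0}$ together with a vector differing from it by $-\vec{e}_j$. This is exactly why $V_M$ was defined with coordinate range $[-||M||, ||M||-1]$ rather than, say, $[0, ||M||-1]$: the extra negative layer of vectors supplies, via inverses, all the standard translations, even for signed permutation matrices, where $||M|| = 1$.
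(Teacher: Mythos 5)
Your proof is correct and follows essentially the same route as the paper: identify the states of $\A_{\M,n}$ with the affine maps $M_{\vec v}$ via Lemma~\ref{l:mvMv}, and recover the generating set~(\ref{e:generators}) from the states $m_{\vec 0}, m_{-\vec e_1},\ldots,m_{-\vec e_d}$ (the paper writes the translation as $m_{\vec 0}(m_{-\vec e_j})^{-1}=\tau_{\vec e_j}$, which is just the inverse of your expression). Your extra verifications (self-similarity, the count $(2||M_i||)^d$, and $\vec 0, -\vec e_j \in V_M$ since $||M||\geqslant 1$) only make explicit what the paper leaves implicit.
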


\begin{proof}
The automaton $\A_{\M,n}$ satisfies the required conditions, and generates precisely the group $G_{\M,n}$. This follows directly from~(\ref{e:generators}) and Lemma~\ref{l:mvMv}, once it is observed that $A_{\M,n}$ has enough
states to generate $G_{\M,n}$. However, this is clear, since each of the automata $\A_{M,n}$, for $M\in \M$,
has at least $d+1$ states, $m_\vec{0}, m_{-\vec{e}_1},\ldots,m_{-\vec{e}_d}$, and
$m_\vec{0}(m_{-\vec{e}_j})^{-1}=\tau_{\vec{e}_j}$, for $j=1,\ldots,d$.
\end{proof}

Theorem~\ref{cor} is an immediate corollary of Lemma~\ref{inv}~(ii) and Proposition~\ref{p:GM}. 

\begin{proof}[Proof of Theorem~\ref{gros}]
Let $d\geqslant 6$ and let $F$ be an orbit undecidable, free subgroup of rank $m$ of $\GL_d(\ZZ)$ (such a group exists by Proposition~\ref{p:free-orbit-undecidable}). Let $\M=\{M_1,\ldots,M_m\}$ be a set of invertible integer $d\times d$ matrices generating $F=\langle \M \rangle$. Fix $n\geqslant 2$ and consider the group $G=G_{\M,n}$. By
Proposition~\ref{p:GM}, $G$ is generated by the finite automaton $\A_{\M,n}$, so it is an automaton group. By
Lemma~\ref{inv}~(ii), $G$ does not depend on $n$ and is in fact isomorphic to $\ZZ^d \rtimes F$ (since all matrices in $\M$ are invertible over $\ZZ$); so, it is a $\ZZ^d$-by-free group. Finally, by Proposition~\ref{gamma}, $G=G_{\M,n}$ has unsolvable conjugacy problem.
\end{proof}

Theorem~\ref{t:main} is an immediate corollary of Theorem~\ref{gros}.


\subsection*{Acknowledgments} The authors express their gratitude to CRM at Universit\'e de Montr\'eal and the Organizers of the Thematic Semester in Geometric, Combinatorial, and Computational Group Theory for their hospitality and support in the fall of 2010, when this research was conducted. The first named author was partially supported by the NSF under DMS-0805932 and DMS-1105520. The second one was partial supported by the MEC (Spain) and the EFRD (EC) through projects number MTM2008-01550 and PR2010-0321.


\def\cprime{$'$}


\end{document}